\newtheorem{theorem}{Theorem}%[section]
\newtheorem{lemma}[theorem]{Lemma}
\newtheorem{corollary}[theorem]{Corollary}
\theoremstyle{definition}
\begin{document}

\title[Inner automorphisms of Lie algebras of symmetric polynomials]
{Inner automorphisms of Lie algebras \\ of symmetric polynomials}

\author[{\c S}ehmus F{\i}nd{\i}k, Nazar {\c S}ah\.{i}n \"O{\u g}\"U{\c s}l\"u]
{{\c S}ehmus F{\i}nd{\i}k and Nazar {\c S}ah\.{i}n \"O{\u g}\"U{\c s}l\"u}

\address{Department of Mathematics,
\c{C}ukurova University, 01330 Balcal\i,
 Adana, Turkey}
\email{sfindik@cu.edu.tr}
\email{noguslu@cu.edu.tr}
\thanks
{}

\subjclass[2010]{17B01; 17B30; 17B40.}
\keywords{Lie algebras, metabelian, nilpotent, symmetric polynomials, automorphisms.}

\begin{abstract}
Let $L_{n}$ be the free Lie algebra,
$F_{n}$ be the free metabelian Lie algebra,
and $L_{n,c}$ be the free metabelian nilpotent of class $c$ Lie algebra of rank $n$
generated by $x_1,\ldots,x_n$  over a field $K$ of characteristic zero.
We call a polynomial $p(X_n)$ symmetric in these Lie algebras if
$p(x_1,\ldots,x_n)=p(x_{\pi(1)},\ldots,x_{\pi(n)})$ for each element $\pi$ 
of the symmetric group $S_n$.
The sets $L_n^{S_n}$,  $F_n^{S_n}$, and  $L_{n,c}^{S_n}$ of symmetric polynomials
coincide with the algebras of invariants of the group $S_n$ in $L_{n}$, $F_{n}$, and $L_{n,c}$, respectively.
We determine the groups $\text{\rm Inn}(F_{n}^{S_n})$ and $\text{\rm Inn}(L_{n,c}^{S_n})$ of inner automorphisms of the algebras $F_{n}^{S_n}$
and $L_{n,c}^{S_n}$, respectively. In particular, we obtain the descriptions of 
the groups $\text{\rm Aut}(L_{2}^{S_2})$, $\text{\rm Aut}(F_{2}^{S_2})$, and $\text{\rm Aut}(L_{2,c}^{S_2})$ of
all automorphisms of the algebras $L_{2}^{S_2}$, $F_{2}^{S_2}$, and $L_{2,c}^{S_2}$, respectively.
\end{abstract}

\maketitle

\section*{Introduction}

Let $A_n$ be the free algebra of rank $n$ over a field $K$ of characteristic zero in a variety of algebras
generated by $X_n=\{x_1,\ldots,x_n\}$. A polynomial $p(X_n)\in A_n$ is said to be symmetric if
$p(x_1,\ldots,x_n)=p(x_{\pi(1)},\ldots,x_{\pi(n)})$ for all $\pi\in S_n$. The set
of such polynomials is equal to the algebra $A_n^{S_n}$ of invariants of the symmetric group $S_n$.
The algebra $A_n^{S_n}$ is well known, when $A_n=K[X_n]$ is the
commutative associative unitary algebra by the fundamental theorem on symmetric polynomials:
\[
K[X_n]^{S_n}=K[\sigma_1,\ldots,\sigma_n], \ \ \sigma_i=x_1^i+\cdots+x_n^i, \ \ i=1,\ldots,n.
\]
For the case $A_n=K\left\langle X_{n}\right\rangle$, the associative algebra of rank $n$, see e.g. \cite{Wo}.
Now let $A_n=F_n$ be the free metabelian Lie algebra of rank $n$ over $K$.
It is well known, see e.g. \cite{Dr}, that the algebra $F_n^{S_n}$ of symmetric polynomials is not finitely generated.
Recently the authors \cite{FO} have provided an infinite set of generators for $F_2^{S_2}$, later
the result was generalized in \cite{DFO}.

One may consider the group $\text{\rm Aut}(F_n^{S_n})$ of automorphisms
preserving the algebra $F_n^{S_n}$. The group  $\text{\rm Aut}(F_n)$
is a semidirect product of the general linear group $ \text{\rm GL}_n(K)$ 
and the group $\text{\rm IAut}(F_n)$ of automorphisms which are equivalent to the identity map modulo the commutator ideal
$F_n'$. Hence it is natural to work in $\text{\rm IAut}(F_n)$ approaching the group $\text{\rm Aut}(F_n^{S_n})$.
However the complete decription of the group $\text{\rm IAut}(F_n)$ is unknown, while its normal subgroup $\text{\rm Inn}(F_n)$
of inner automorphisms is well known.

In this study, we determine the group of inner automorphisms of $F_n^{S_n}$.
Additionally we describe the group $\text{\rm Inn}(L_{n,c}^{S_n})$ of inner automorphisms,
where $L_{n,c}$ is the free metabelian Lie algebra  of nilpotency class $c$.
Later, we describe the groups $\text{\rm Aut}(L_2^{S_2})$, $\text{\rm Aut}(F_2^{S_2})$ and $\text{\rm Aut}(L_{2,c}^{S_2})$,
where $L_2$ is the free Lie algebra of rank $2$.

\section{Preliminaries}

Let $L_n$ be the free Lie algebra of rank $n\geq2$ generated by $X_n=\{x_1,\ldots,x_n\}$ over a field $K$ of characteristic zero.
We denote by $F_n=L_n/L_n''$ the free metabelian Lie algebra,
and $L_{n,c}=L_n/(L_n''+\gamma^{c+1}(L_n))$ the free metabelian nilpotent Lie algebra of nilpotency class $c$, where
$\gamma^1(L_n)=L_n$, $\gamma^2(L_n)=[L_n,L_n]=L_n'$ is the commutator ideal of $L_n$, $\gamma^k(L_n)=[\gamma^{k-1}(L_n),L_n]$, $k\geq2$, and $L_n''=[L_n',L_n']$.
We assume that the algebras $F_n$ and $L_{n,c}$ of rank $n$ are generated by the same set $X_n$.

The commutator ideal $F_n'$ of the free metabelian Lie algebra $F_n$ is of a natural
$K[X_n]$-module structure as a consequence of the metabelian identity
\[
[[z_1,z_2],[z_3,z_4]]=0, \ \ z_1,z_2,z_3,z_4\in F_n,
\]
with action:
\[
f(X_n)g(x_1,\ldots,x_n)=f(X_n)g(\text{\rm ad}x_1,\ldots,\text{\rm ad}x_n), \ \ f(X_n)\in F_n', \ \ g(X_n)\in K[X_n],
\]
where $K[X_n]$ is the (commutative, associative, unitary) polynomial algebra,
and the adjoint action is defined as $z_1\text{ad}z_2=[z_1,z_2]$, for $z_1,z_2\in F_n$.
One may define a similar action on the free metabelian nilpotent Lie algebra $L_{n,c}$. 
It is well known by Bahturin \cite{Ba} that the monomials
$[x_{k_1},x_{k_2}]x_{k_3}\cdots x_{k_l}$, $k_1>k_2\leq k_3\leq k_l$,
form a basis for $F_n'$, which is a basis for $L_{n,c}'$ when $l\leq c-2$.

An element $s(X_n)$ in $L_n$, $F_n$, or $L_{n,c}$ is called symmetric
if
\[
s(x_1,\ldots,x_n)=s(x_{\pi(1)},\ldots,x_{\pi(n)})=\pi s(x_1,\ldots,x_n)
\]
for each permutation $\pi$
in the symmetric group $S_n$. The sets $L_n^{S_n}$, $F_n^{S_n}$, and $L_{n,c}^{S_n}$ of symmetric
polynomials coincide with the algebras of invariants of the group $S_n$. See the work \cite{FO} for
generators of the algebra $F_2^{S_2}$, and its generalization \cite{DFO} for the full description of the algebra
$F_n^{S_n}$. The algebra $L_{n,c}^{S_n}$ is a direct consequence of the known results on the algebra
$F_n^{S_n}$.

Let $A_n$ stand for $L_n$, $F_n$, or $L_{n,c}$. 
It is well known that the automorphism group $\text{\rm Aut}(A_n)$ is a semidirect product
of the general linear group $ \text{\rm GL}_n(K)$ 
and the group $\text{\rm IAut}(A_n)$ of automorphisms which are equivalent to the identity map modulo the commutator ideal
$A_n'$. Hence it is natural to work in $\text{\rm IAut}(A_n)$
when determining the whole group $\text{\rm Aut}(A_n)$.
Now consider the group $\text{\rm Aut}(A_n^{S_n})$ of automorphisms
consisting of automorphisms of $A_n$ preserving each symmetric polynomial in the algebra $A_n^{S_n}$.

In the next section, as an approach to the group $\text{\rm IAut}(A_n^{S_n})$,
we describe the inner automorphism group $\text{\rm Inn}(A_n^{S_n})$ for $A_n=F_n$, and $A_n=L_{n,c}$.
The case $\text{\rm Inn}(L_n^{S_n})$ is not under the consideration, since the free Lie algebra $L_n$ does not have nontrivial inner automorphisms.
Later, we obtain the groups $\text{\rm Aut}(L_2^{S_2})$,
$\text{\rm Aut}(F_2^{S_2})$, and $\text{\rm Aut}(L_{2,c}^{S_2})$ as  a consequence of the results obtained.

\section{Main Results}

\subsection{Inner automorphisms of $F_n^{S_n}$}

Let $u\in F_n'$ be an element from the commutator ideal of the free metabelian Lie algebra $F_n$.
Then the adjoint operator $\text{\rm ad}u:v\to [v,u]$ is a nilpotent derivation of $F_n$, and
$\psi_u=\exp(\text{\rm ad}u)=1+\text{\rm ad}u$
is an automorphism of the Lie algebra $F_n$. The inner automorphism group $\text{\rm Inn}(F_n)$
of $F_n$ consisting of such automorphisms is abelian: $\psi_{u_1}\psi_{u_2}=\psi_{u_1+u_2}$, $\psi_u^{-1}=\psi_{-u}$.

In the following theorem we determine the group $\text{\rm Inn}(F_n^{S_n})$ of inner automorphisms preserving the algebra $F_n^{S_n}$.

\begin{theorem}\label{metabelian}
The automorphism $\psi_u\in\text{\rm Inn}(F_n^{S_n})$ if and only if $u\in (F_n')^{S_n}$.
\end{theorem}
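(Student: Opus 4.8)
The plan is to read the statement $\psi_u\in\text{\rm Inn}(F_n^{S_n})$ as the condition that the inner automorphism $\psi_u=1+\text{\rm ad}\,u$ maps the subalgebra $F_n^{S_n}$ into itself. Since $\psi_u(v)=v+[v,u]$ and $F_n^{S_n}$ is a subspace, for $v\in F_n^{S_n}$ one has $\psi_u(v)\in F_n^{S_n}$ if and only if $[v,u]\in F_n^{S_n}$. So the statement to prove becomes: $[v,u]\in F_n^{S_n}$ for every $v\in F_n^{S_n}$ if and only if $u\in (F_n')^{S_n}$. For the easy direction, suppose $u$ is symmetric. For any $\pi\in S_n$ and any symmetric $v$, the $S_n$-action respects the bracket, so $\pi[v,u]=[\pi v,\pi u]=[v,u]$; hence $[v,u]$ is symmetric and $\psi_u$ preserves $F_n^{S_n}$. (One also records that $\text{\rm ad}\,u$ is nilpotent of order $2$ by the metabelian identity, so $\psi_u$ is genuinely an automorphism, as already noted in the text.)

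The substance is the converse. Assume $\psi_u$ preserves $F_n^{S_n}$. First I would extract, for every symmetric $v$, that $[v,u]=\psi_u(v)-v$ is symmetric. Applying $\pi\in S_n$ and using $\pi[v,u]=[\pi v,\pi u]=[v,\pi u]$, where $\pi v=v$ because $v$ is symmetric, gives $[v,\pi u]=[v,u]$, that is,
\[
[v,\pi u-u]=0\qquad\text{for every }v\in F_n^{S_n}\text{ and every }\pi\in S_n .
\]
Now I would specialize $v$ to the symmetric element $\sigma_1=x_1+\cdots+x_n\in F_n^{S_n}$. In the $K[X_n]$-module structure on $F_n'$ the bracket with $\sigma_1$ is exactly multiplication by $\sigma_1$, so the displayed identity reads $(\pi u-u)\,\sigma_1=0$ in $F_n'$. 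Writing $w=\pi u-u\in F_n'$, it remains to conclude that $w=0$, i.e. $\pi u=u$ for all $\pi$, which is precisely $u\in (F_n')^{S_n}$.

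Thus the whole argument reduces to one module-theoretic fact, which I expect to be the main obstacle: \emph{multiplication by the nonzero element $\sigma_1=x_1+\cdots+x_n$ is injective on the $K[X_n]$-module $F_n'$}. This is immediate once one knows that $F_n'$ is torsion-free over the integral domain $K[X_n]$, since multiplication by a nonzero element of a domain is injective on any torsion-free module. Torsion-freeness in turn follows from the standard description of $F_n'$ as a $K[X_n]$-submodule of a free module (the Lie analogue of the Magnus embedding), or can be read off from Bahturin's basis $[x_{k_1},x_{k_2}]x_{k_3}\cdots x_{k_l}$; for instance multiplication by the top-index variable $x_n$ already carries these basis monomials injectively to basis monomials, which is the sort of nondegeneracy one exploits to rule out torsion. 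With this fact in hand, $w\sigma_1=0$ forces $w=\pi u-u=0$ for every $\pi$, and the theorem follows.

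I note finally that the same specialization $v=\sigma_1$ also confirms the setup is nondegenerate: the correspondence $u\mapsto\psi_u$ is injective on $F_n'$ (if $[v,u]=0$ for all $v$ then already $u\sigma_1=0$, so $u=0$), so $\text{\rm Inn}(F_n^{S_n})$ is genuinely parametrized by $(F_n')^{S_n}$ and inherits the abelian group law $\psi_{u_1}\psi_{u_2}=\psi_{u_1+u_2}$.
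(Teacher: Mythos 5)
Your proposal is correct and follows essentially the same route as the paper: reduce to $[v,\pi u-u]=0$ for a symmetric $v$ with nonzero linear part (the paper's $v_l$, your $\sigma_1$) and conclude $\pi u=u$. The only difference is that you explicitly justify the final cancellation via torsion-freeness of $F_n'$ as a $K[X_n]$-module, a step the paper leaves implicit.
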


\begin{proof}
If $u\in (F_n')^{S_n}$, then clearly $\psi_u(v)=v+[v,u]\in F_n^{S_n}$ for every $v\in F_n^{S_n}$.
Conversely let $v\in F_n^{S_n}$ be a symmetric polynomial, and $u\in F_n'$ be an arbitrary element. We may assume that the linear (symmetric) summand $v_l$ of $v$ is nonzero,
since  $\psi_u$ acts identically on the commutator ideal $ F_n'$ of the free metabelian Lie algebra $F_n$.
Then $\psi_u(v)\in F_n^{S_n}$ implies that $[v,u]=[v_l,u]\in F_n^{S_n}$. 
For each $\pi\in S_n$, we have that
\[
[v_l,u]=\pi[v_l,u]=[\pi  v_l,\pi u]=[v_l,\pi u],
\]
and $[v_l,u-\pi u]=0$, which gives that $u-\pi u=0$ or $u=\pi u$.
\end{proof}

\subsection{Inner automorphisms of $L_{n,c}^{S_n}$}

Let $u$ be an element in the free metabelian nilpotent Lie algebra $L_{n,c}$.
Then the adjoint operator $\text{\rm ad}u(v)=[v,u]$ is a nilpotent derivation of $L_{n,c}$, since $\text{\rm ad}^cu=0$ and
\[
\varepsilon_u=\exp(\text{\rm ad}u)=1+\text{\rm ad}u+\frac{1}{2}\text{\rm ad}^2u+\cdots+\frac{1}{(c-1)!}\text{\rm ad}^{c-1}u
\]
is an inner automorphism of $L_{n,c}$. Note that if $u\in\gamma^c(L_{n,c})$, then $\varepsilon_u$ acts
identically on $L_{n,c}$, thus we may define the
inner automorphism group of $L_{n,c}$ as follows.
\[
\text{\rm Inn}(L_{n,c})=\{\varepsilon_u\mid u\in L_{n,c}-\gamma^c(L_{n,c})\}\cup\{1\}.
\]
In this subsection, we investigate the group $\text{\rm Inn}(L_{n,c}^{S_n})$
of inner automorphisms of the algebra $L_{n,c}^{S_n}$.

\begin{lemma}\label{lineartimeslinear}
Let $u=\sum_{i=1}^n\alpha_ix_i$ for some $\alpha_i\in K$, and $v=\sum_{i=1}^nx_i\in L_{n,c}^{S_n}$ such that $[u,v]\in L_{n,c}^{S_n}$.
Then $u=\alpha v$ for some $\alpha\in K$.
\end{lemma}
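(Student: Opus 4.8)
The plan is to reduce everything to the homogeneous degree-two component of $L_{n,c}$, where the relevant commutators belong to Bahturin's basis and are therefore linearly independent. Since $u$ and $v$ are both linear, the bracket $[u,v]$ is homogeneous of degree two, so I would first write it out explicitly. Expanding and collecting over unordered pairs, using $[x_i,x_j]=-[x_j,x_i]$ and $[x_i,x_i]=0$, one finds
\[
[u,v]=\sum_{i<j}(\alpha_j-\alpha_i)[x_j,x_i],
\]
already expressed in terms of the basis elements $[x_j,x_i]$ with $j>i$.

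Next I would impose the hypothesis that $[u,v]$ is symmetric. Rather than comparing the coefficients above directly, the cleanest route is to test against transpositions: for $\tau=(a\,b)\in S_n$, symmetry of $v$ gives $\tau[u,v]=[\tau u,\tau v]=[\tau u,v]$, so the requirement $\tau[u,v]=[u,v]$ forces $[\tau u-u,\,v]=0$. Since $\tau$ merely interchanges the coefficients $\alpha_a$ and $\alpha_b$, one computes $\tau u-u=(\alpha_b-\alpha_a)(x_a-x_b)$, and the condition becomes
\[
(\alpha_b-\alpha_a)\,[x_a-x_b,\,v]=0.
\]

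The remaining step is to verify that $[x_a-x_b,v]\neq 0$, which is where the degree-two nondegeneracy of $L_{n,c}$ (nilpotency class at least two) enters. Writing $[x_a-x_b,v]=\sum_{k}\bigl([x_a,x_k]-[x_b,x_k]\bigr)$ and reading off the coefficient of the basis element $[x_{\max(a,b)},x_{\min(a,b)}]$, I expect to get exactly $2$, which is nonzero because $K$ has characteristic zero; hence $[x_a-x_b,v]\neq 0$. It then follows that $\alpha_a=\alpha_b$, and since $a,b$ were arbitrary all the $\alpha_i$ coincide, say $\alpha_i=\alpha$, giving $u=\alpha v$. I do not anticipate a genuine obstacle: the only points requiring care are confirming that the degree-two commutators $[x_j,x_i]$, $j>i$, really survive and stay independent in $L_{n,c}$ (so that $[x_a-x_b,v]$ is indeed nonzero), and making sure one remains within the homogeneous degree-two component throughout, so that no higher-degree terms can interfere with the coefficient comparison.
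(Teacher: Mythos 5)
Your proposal is correct and follows essentially the same route as the paper: test symmetry against transpositions, reduce to $(\alpha_a-\alpha_b)[x_a-x_b,v]=0$, and expand $[x_a-x_b,v]$ in the basis of degree-two commutators to see the coefficient $2$ on $[x_{\max(a,b)},x_{\min(a,b)}]$, exactly as in the paper's computation with $\pi=(1k)$. The only cosmetic difference is that you use arbitrary transpositions where the paper restricts to those moving $1$.
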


\begin{proof}
Let $\pi=(1k)\in S_n$ be a fixed transposition for $k=2,\ldots,n$. Then
\[
\pi u=\alpha_1x_{\pi(1)}+\cdots+\alpha_nx_{\pi(n)}=\alpha_1x_k+\alpha_kx_1+\sum_{i\neq1,k}\alpha_ix_i,
\]
and $u-\pi u=\alpha_1(x_1-x_k)+\alpha_k(x_k-x_1)=\alpha_{1k}(x_1-x_k)$,
where $\alpha_{1k}=\alpha_1-\alpha_k$. Now $[u,v]\in L_{n,c}^{S_n}$ gives that
$[u,v]=\pi[u,v]=[\pi u,\pi v]=[\pi u,v]$,
and hence
\begin{align}
0&=[u-\pi u,v]=[\alpha_{1k}(x_1-x_k),x_1+\cdots+x_n]\nonumber\\
&=\alpha_{1k}\left(2[x_{1},x_{k}]+\sum_{i\neq 1,k}[ x_1,x_i]+\sum_{i\neq 1,k}[x_i,x_k]\right)\nonumber
\end{align}
where the elements in the paranthesis are basis elements of $L_{n,c}$,
which implies that $\alpha_{1k}=0$, $k\geq2$. This completes the proof by the choice $\alpha=\alpha_1=\cdots=\alpha_n$.
\end{proof}

\begin{theorem}\label{metabeliannilpotent}
$\text{\rm Inn}(L_{n,c}^{S_n})=\{\varepsilon_u\mid u\in  L_{n,c}^{S_n}-\gamma^c(L_{n,c})^{S_n}\}\cup\{1\}$.
\end{theorem}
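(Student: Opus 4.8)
The plan is to prove both inclusions, probing the automorphism on the single symmetric element $\sigma = x_1 + \cdots + x_n$, whose linear part is nonzero. For the inclusion $\supseteq$, suppose $u \in L_{n,c}^{S_n}$. Since each $\pi \in S_n$ is an automorphism of $L_{n,c}$ fixing $u$, we have $\pi \circ \operatorname{ad} u = \operatorname{ad}(\pi u)\circ \pi = \operatorname{ad} u \circ \pi$, so $\pi$ commutes with $\varepsilon_u = \exp(\operatorname{ad} u)$. Hence for symmetric $v$ we get $\pi\varepsilon_u(v) = \varepsilon_u(\pi v) = \varepsilon_u(v)$, so $\varepsilon_u(v) \in L_{n,c}^{S_n}$ and $\varepsilon_u \in \operatorname{Inn}(L_{n,c}^{S_n})$; this automorphism is nontrivial exactly when $u \notin \gamma^c(L_{n,c})$.

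For the converse, assume $\varepsilon_u \in \operatorname{Inn}(L_{n,c}^{S_n})$ and write $u = u_1 + u'$ with $u_1$ linear and $u' \in L_{n,c}'$. Applying $\varepsilon_u$ to $\sigma$, the element $\varepsilon_u(\sigma) - \sigma = \sum_{k \geq 1}\frac{1}{k!}\operatorname{ad}^k u(\sigma)$ is symmetric and lies in $L_{n,c}'$. Its degree-two component is exactly $[\sigma, u_1]$, which is therefore symmetric, and Lemma \ref{lineartimeslinear} forces $u_1 = \alpha\sigma$ for some $\alpha \in K$; thus the linear part of $u$ is symmetric.

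Next I would use the metabelian identity to collapse the exponential. Since now $[\sigma, u_1] = 0$, we have $\operatorname{ad} u(\sigma) = [\sigma, u']$, and since $[y, u'] = 0$ for every $y \in L_{n,c}'$, the operator $\operatorname{ad} u$ agrees with $\alpha\operatorname{ad}\sigma$ on $L_{n,c}'$. Writing $D = \operatorname{ad}\sigma$ this gives $\operatorname{ad}^k u(\sigma) = \alpha^{k-1}D^{k-1}([\sigma, u'])$ for $k \geq 1$, hence
\[
\varepsilon_u(\sigma) - \sigma = (1 + N)\,[\sigma, u'], \qquad N = \sum_{k \geq 2}\frac{(\alpha D)^{k-1}}{k!}.
\]
The operator $N$ is nilpotent (as $D$ is) and commutes with every $\pi \in S_n$ (because $\sigma$ is symmetric), so $(1+N)^{-1}$ is a well-defined $S_n$-equivariant operator. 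Applying it to the symmetric element $\varepsilon_u(\sigma) - \sigma$ shows that $[\sigma, u']$ is symmetric.

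Finally I would descend from $[\sigma, u']$ to $u'$ degree by degree. Comparing homogeneous components, $[\sigma, u'_d - \pi u'_d] = 0$ for every $\pi$ and every $d$. For $d \leq c-1$ the element $u'_d - \pi u'_d$ and the relation $[\sigma, u'_d - \pi u'_d]=0$ survive in the free metabelian algebra $F_n$ (the truncation affects only degrees $> c$), so the injectivity step already used in the proof of Theorem \ref{metabelian} gives $u'_d = \pi u'_d$, i.e. $u'_d$ is symmetric. The top component $u'_c$ lies in $\gamma^c(L_{n,c})$ and is central, so $\varepsilon_u = \varepsilon_{u - u'_c}$ with $u - u'_c = \alpha\sigma + u'_2 + \cdots + u'_{c-1}$ symmetric; hence $\varepsilon_u = \varepsilon_{\tilde u}$ for $\tilde u \in L_{n,c}^{S_n}$, with $\tilde u \notin \gamma^c(L_{n,c})$ when $\varepsilon_u \neq 1$, which is the inclusion $\subseteq$. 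I expect the main obstacle to be controlling the higher terms of the exponential $\varepsilon_u$: unlike the metabelian case, where $\operatorname{ad} u$ squares to zero, here one must ensure these terms do not obscure the symmetry of $u$. The device that resolves this is the reduction $\operatorname{ad} u|_{L_{n,c}'} = \alpha\operatorname{ad}\sigma$, which presents $\varepsilon_u(\sigma) - \sigma$ as an invertible, equivariant image of $[\sigma, u']$; the remaining content is then precisely the metabelian injectivity of multiplication by $\sigma$ in degrees $\le c-1$, with the top degree absorbed by centrality.
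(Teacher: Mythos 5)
Your proof is correct and follows essentially the same route as the paper: probe with $\sigma=x_1+\cdots+x_n$, extract the degree-two component and apply Lemma~\ref{lineartimeslinear} to get $u_1=\alpha\sigma$, use the metabelian identity to collapse the exponential into a multiple of $[\sigma,u']$, and finish with the injectivity argument from Theorem~\ref{metabelian}. You are in fact more careful than the paper at two points where it is terse --- inverting the equivariant nilpotent operator $1+N$ to pass from $(1+N)[\sigma,u']$ to $[\sigma,u']$, and observing that the degree-$c$ component of $u'$ is not determined but is central and so can be discarded by replacing $u$ with $\tilde u$.
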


\begin{proof}
If $u,v\in L_{n,c}^{S_n}$ then it is straighforward to see  that
\[
\varepsilon_u(v)=v+[v,u]+\cdots+(1/(c-1)!)[[\cdots[v,u],\ldots],u]\in L_{n,c}^{S_n}.
\]
Conversely, let $u=u_l+u_0\in L_{n,c}$ be an arbitrary element 
and $v=v_l+v_0\in L_{n,c}^{S_n}$ be a symmetric polynomial such that $\varepsilon_u(v)\in L_{n,c}^{S_n}$,
where $u_l$ and $v_l$ are the linear components of $u$ and $v$, respectively.
In the expression of $\varepsilon_u(v)$, the component of degree $2$ is $[v_l,u_l]$
which is symmetric by the natural grading on the Lie algebra $L_{n,c}^{S_n}$. 
Hence $u_l=\alpha v_l$ for some $\alpha\in K$ by Lemma \ref{lineartimeslinear}, and $[v_l,u_l]=0$.
Note that $[v_0,u_0]=0$ by metabelian identity. The computations
\begin{align}
\varepsilon_u(v)=&v+[v_l+v_0,u_l+u_0]\sum_{k=0}^{c-2}\frac{u_l^k}{(k+1)!}\nonumber\\
=&v+[v_0,v_l]\sum_{k=0}^{c-3}\frac{\alpha^{k+1}v_l^k}{(k+1)!}+[v_l,u_0]\sum_{k=0}^{c-3}\frac{\alpha^kv_l^k}{(k+1)!}\nonumber
\end{align}
give that $[v_l,u_0]\sum_{k=0}^{c-3}\frac{\alpha^kv_l^k}{(k+1)!}\in L_{n,c}^{S_n}$, and that $[v_l,u_0]$ is symmetric.
Finally, following the lines of proof of Theorem \ref{metabelian}, by similar computations we obtain that $u_0\in L_{n,c}^{S_n}-\gamma^c(L_{n,c})^{S_n}$.
\end{proof}

\subsection{Automorphisms of $L_2^{S_2}$, $F_2^{S_2}$, and $L_{2,c}^{S_2}$}

In the sequel, we fix the notation $x_1=x$, $x_2=y$, for the sake of simplicity.
It is well known by \cite{C} that each automorphism of $L_2$ is linear. The next theorem determines the automorphism group $\text{\rm Aut}(L_2^{S_2})$.

\begin{theorem}\label{linear}
Let $\xi\in \text{\rm Aut}(L_2^{S_2})$. Then $\xi$ and its inverse $\xi^{-1}$ are of the form
\[
\xi(x)=ax+by, \ \ \xi(y)=bx+ay,
\]
\[
\xi^{-1}(x)=c^{-1}ax-c^{-1}by,\ \ \xi^{-1}(y)=-c^{-1}bx+c^{-1}ay,
\]
such that $c=a^2-b^2\neq 0$, $a,b\in K$.
\end{theorem}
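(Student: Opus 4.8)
The plan is to lean on the result of Cohn \cite{C} that every automorphism of $L_2$ is linear: writing $\xi(x)=a_1x+b_1y$ and $\xi(y)=a_2x+b_2y$ with $D=a_1b_2-a_2b_1\neq 0$, the map $\xi$ is degree preserving, so the requirement that $\xi$ send $L_2^{S_2}$ into itself can be read off degree by degree by forcing each homogeneous symmetric component to land in a symmetric element again. The point is that the lowest degrees already pin the matrix of $\xi$ down to the claimed persymmetric form.

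First I would catalogue the symmetric elements of low degree. In degree $1$ the symmetric subspace is $K(x+y)$. In degree $2$ the whole space is $K[x,y]$, and since $\pi[x,y]=[y,x]=-[x,y]$ there is \emph{no} nonzero symmetric element; this is exactly why the single relation coming from degree $1$ cannot suffice and one is forced higher. In degree $3$ the space $K[[x,y],x]\oplus K[[x,y],y]$ carries the involution $\pi[[x,y],x]=-[[x,y],y]$ and $\pi[[x,y],y]=-[[x,y],x]$, so a combination $\alpha[[x,y],x]+\beta[[x,y],y]$ is symmetric precisely when $\alpha+\beta=0$; hence $s=[[x,y],x]-[[x,y],y]$ spans the degree-$3$ symmetric part.

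Next I would apply $\xi$ to these two symmetric elements. From $\xi(x+y)=(a_1+a_2)x+(b_1+b_2)y\in K(x+y)$ I get $a_1+a_2=b_1+b_2$. Using $\xi[x,y]=[\xi x,\xi y]=D[x,y]$ together with the homomorphism property, a short computation gives
\[
\xi(s)=D\big((a_1-a_2)[[x,y],x]+(b_1-b_2)[[x,y],y]\big),
\]
and demanding that $\xi(s)$ be symmetric, so that its two coefficients sum to zero, yields (since $D\neq 0$) the relation $a_1+b_1=a_2+b_2$. Solving the two linear relations forces $a_2=b_1$ and then $a_1=b_2$; setting $a=a_1=b_2$ and $b=a_2=b_1$ gives exactly $\xi(x)=ax+by$ and $\xi(y)=bx+ay$. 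Finally $c=D=a^2-b^2\neq 0$, and inverting the persymmetric matrix produces $\xi^{-1}(x)=c^{-1}(ax-by)$ and $\xi^{-1}(y)=c^{-1}(-bx+ay)$, as stated.

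For the converse, any $\xi$ of this form commutes with the transposition $\pi$ on the generators, hence $\xi\pi=\pi\xi$ as homomorphisms of the free algebra $L_2$, so $\xi$ fixes the subalgebra $L_2^{S_2}$ setwise and is invertible precisely when $c\neq 0$. The main obstacle is not any single hard step but the bookkeeping in the degree-$3$ bracket computation, combined with the observation that the degree-$2$ symmetric space is trivial: the one relation from degree $1$ must be supplemented by the degree-$3$ relation before the persymmetric form is forced, after which the description of $\xi^{-1}$ is immediate.
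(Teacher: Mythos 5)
Your proposal is correct and follows essentially the same route as the paper: invoke Cohn's result that automorphisms of $L_2$ are linear, then extract the two linear relations on the matrix entries by applying $\xi$ to the symmetric elements $x+y$ and $[[x,y],x]-[[x,y],y]$, which forces the persymmetric form. Your added remarks (that degree $2$ contributes no symmetric elements, and that the converse follows since $\xi$ commutes with the transposition $\pi$) are correct elaborations of steps the paper leaves implicit, but they do not change the argument.
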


\begin{proof}
Let $\xi$ be of the form $\xi:x\to ax+by, \ y\to cx+dy$
such that $ad\neq bc$, where $a,b,c,d\in K$.
Since $x+y\in L_2^{S_2}$, then
$\xi(x+y)=(a+c)x+(b+d)y\in L_2^{S_2}$,
which is contained in $K\{x+y\}$. Hence $a+c=b+d$.  On the other hand
$[[x,y],x]-[[x,y],y]\in L_2^{S_2}$, and
\[
\xi([[x,y],x]-[[x,y],y])=\beta\left((a-c)[[x,y],x]+(b-d)[[x,y],y]\right)
\]
where $\beta=ad-bc\neq 0$. The fact that $\xi\in \text{\rm Aut}(L_2^{S_2})$ gives $a-c=-b+d$. Consequently, $a=d$ and $b=c$.
Conversely, it is straightforward to show that the automorphism stated in the theorem preserves symmetric polynomials.
\end{proof}

It is well known, see the book by Drensky \cite{Dr}, that
each automorphism of $F_2$ is a product of a linear automorphism and an inner automorphism of $F_2$.
We obtain the following result as a consequence of Theorem \ref{linear} and Theorem \ref{metabelian}.

\begin{corollary}
Let $\varphi\in \text{\rm Aut}(F_2^{S_2})$. Then $\varphi$ is a product of a linear automorphism
 $\xi=\xi(a,b)$ as in Theorem \ref{linear} and an inner automorphism $\psi_u$ where $u\in (F_2')^{S_2}$.
\end{corollary}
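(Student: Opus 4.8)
The plan is to combine the known structural decomposition of $\text{\rm Aut}(F_2)$ with Theorem \ref{linear} and Theorem \ref{metabelian}. Since $\varphi\in\text{\rm Aut}(F_2^{S_2})$ is in particular an automorphism of $F_2$, the result quoted from \cite{Dr} lets me write $\varphi=\xi\psi_u$, where $\xi$ is a linear automorphism of $F_2$ and $\psi_u$ is inner with $u\in F_2'$; the factor $\xi$ can be placed on the left because $\text{\rm Inn}(F_2)$ is normal in $\text{\rm Aut}(F_2)$. The key preliminary observation is that $\psi_u$ fixes the commutator ideal $F_2'$ pointwise: for $w\in F_2'$ one has $\psi_u(w)=w+[w,u]$ and $[w,u]\in[F_2',F_2']=F_2''=0$ by the metabelian identity. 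Consequently $\varphi$ and $\xi$ agree on $F_2'$, and the image of $\varphi$ in $\text{\rm GL}_2(K)=\text{\rm Aut}(F_2)/\text{\rm IAut}(F_2)$ is exactly $\xi$.

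Next I would show that $\xi$ must have the symmetric form of Theorem \ref{linear}. Because the $S_2$-action preserves the natural degree grading, every homogeneous component of a symmetric polynomial is again symmetric. Applying this to $\varphi(x+y)\in F_2^{S_2}$, whose linear component equals $\xi(x+y)$ regardless of the order of the factors $\xi,\psi_u$, forces $\xi(x+y)\in K\{x+y\}$, which is the first linear constraint on the coefficients of $\xi$. For the second constraint I would use the symmetric cubic element $s=[[x,y],x]-[[x,y],y]\in(F_2')^{S_2}$: since $\varphi$ agrees with $\xi$ on $F_2'$, we have $\varphi(s)=\xi(s)$, and this must be symmetric. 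The point is that $s$ lies in degree $3$, where the metabelian relation (which first appears in degree $4$) is vacuous, so $F_2$ and $L_2$ have identical structure and the computation of $\xi(s)$ in the Bahturin basis is verbatim the one carried out in the proof of Theorem \ref{linear}. These two constraints force the symmetric shape $\xi(x)=ax+by$, $\xi(y)=bx+ay$, and invertibility of $\xi$ gives $a^2-b^2\neq0$, i.e. $\xi=\xi(a,b)$.

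Finally, a symmetric linear map $\xi=\xi(a,b)$ commutes with the transposition $(1\,2)$ on the generators, hence with the whole $S_2$-action, so $\xi$ (and $\xi^{-1}$) preserves $F_2^{S_2}$; that is, $\xi\in\text{\rm Aut}(F_2^{S_2})$. Then $\psi_u=\xi^{-1}\varphi$ is an inner automorphism that preserves $F_2^{S_2}$, so $\psi_u\in\text{\rm Inn}(F_2^{S_2})$, and Theorem \ref{metabelian} yields $u\in(F_2')^{S_2}$. This exhibits $\varphi=\xi\psi_u$ in the asserted form. I expect the main obstacle to be the second paragraph, namely verifying that forcing $\xi$ into the symmetric shape really reduces to the degree $3$ computation of Theorem \ref{linear}; once the pointwise triviality of $\psi_u$ on $F_2'$ and the graded nature of $(F_2')^{S_2}$ are in place, the remaining steps are routine bookkeeping with the quoted results.
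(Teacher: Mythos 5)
Your proposal is correct and follows exactly the route the paper intends: the paper states this corollary without a written proof, as a direct consequence of the decomposition of $\text{\rm Aut}(F_2)$ from \cite{Dr} together with Theorem \ref{linear} (whose degree $\leq 3$ computation transfers verbatim to $F_2$) and Theorem \ref{metabelian}. Your write-up simply supplies the details the authors omit, including the useful observations that $\psi_u$ is the identity on $F_2'$ and that the grading lets one isolate the linear and cubic constraints on $\xi$.
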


\begin{theorem}
Let $\varphi\in \text{\rm Aut}(L_{2,c}^{S_2})$. Then $\varphi$ is a product of a linear automorphism
$\xi(a,b)$ and an automorphism $\phi\in\text{\rm IAut}(L_{2,c}^{S_2})$of the form
\begin{align}
\phi:&x\to x+[x,y]f(x,y)\nonumber\\
&y\to y-[x,y]f(y,x).\nonumber
\end{align}
\end{theorem}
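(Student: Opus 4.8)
My plan rests on the semidirect decomposition $\text{\rm Aut}(L_{2,c}) = \text{\rm GL}_2(K)\ltimes\text{\rm IAut}(L_{2,c})$ recalled in the Preliminaries, together with the natural grading of $L_{2,c}$ by total degree. Writing $\varphi = \xi\phi$ with $\xi\in\text{\rm GL}_2(K)$ linear and $\phi\in\text{\rm IAut}(L_{2,c})$, I would first pin down the linear factor $\xi$ and then analyze $\phi$ separately. Since $\phi$ acts as the identity modulo $L_{2,c}'$, the lowest-degree component of each image $\varphi(v)$ is governed by $\xi$ alone, so the linear factor can be read off from leading terms.

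To fix $\xi$, I would apply $\varphi$ to the two low-degree symmetric polynomials already used in Theorem \ref{linear}. The degree-one part of $\varphi(x+y)$ equals $\xi(x+y)$ and must be symmetric, hence proportional to $x+y$; the degree-three part of $\varphi([[x,y],x]-[[x,y],y])$ equals the induced action of $\xi$ on this commutator and must again be symmetric. These are exactly the two relations appearing in the proof of Theorem \ref{linear}, so they force $\xi=\xi(a,b)$ with $a^2-b^2\neq0$. Since $\xi(a,b)$ preserves $L_{2,c}^{S_2}$, the remaining factor $\phi=\xi^{-1}\varphi$ lies in $\text{\rm IAut}(L_{2,c})\cap\text{\rm Aut}(L_{2,c}^{S_2})=\text{\rm IAut}(L_{2,c}^{S_2})$.

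It remains to identify the form of $\phi$. By Bahturin's basis \cite{Ba}, the ideal $L_{2,c}'$ is the cyclic $K[x,y]$-module generated by $[x,y]$, so I would write $\phi:x\mapsto x+[x,y]p(x,y)$, $y\mapsto y+[x,y]q(x,y)$ with $\deg p,\deg q\le c-2$. A short computation using the metabelian identity shows that $\phi$ acts on $L_{2,c}'$ as multiplication by $1+yp-xq$. Consequently, preservation of $x+y$ forces $p+q$ to be antisymmetric, while preservation of the symmetric module $(L_{2,c}')^{S_2}=[x,y]\cdot\{\text{antisymmetric polynomials}\}$ forces $yp-xq$ to be symmetric. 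Introducing $u(x,y)=p(x,y)+q(y,x)$, the first condition makes $u$ antisymmetric and the second rewrites as $(x+y)u\equiv0$ in the module; the asserted form amounts precisely to $u=0$, i.e. $q(x,y)=-p(y,x)$, after which the choice $f=p$ yields the stated automorphism. The main obstacle is exactly extracting $u=0$ from $(x+y)u\equiv0$: because $L_{2,c}$ is nilpotent, multiplication by $x+y$ annihilates the top-degree component of the module, so a priori $u$ could persist in the top degree $c-2$. The crux is therefore a careful degree analysis---testing against the lowest symmetric invariant $[x,y](x-y)$ and invoking injectivity of multiplication by $x+y$ in degrees $\le c-3$---to rule out such top-degree contributions and secure the relation $q(x,y)=-p(y,x)$ exactly, rather than merely modulo $\gamma^c(L_{2,c})$.
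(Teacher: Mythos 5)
You follow exactly the paper's route (split off the linear part via the semidirect decomposition, write $\phi(x)=x+[x,y]p$, $\phi(y)=y+[x,y]q$, and test against $x+y$ and $[x,y](x-y)$), and you have correctly located where all the weight falls: passing from $(x+y)u\equiv 0$ to $u=0$, where $u(x,y)=p(y,x)+q(x,y)$. The gap is that the ``careful degree analysis'' you defer to cannot be supplied. The relation you actually obtain from $[x,y](x-y)$ is $[x,y](x-y)(x+y)u=0$ \emph{in} $L_{2,c}$, which forces the homogeneous component $u_d$ to vanish only for $4+d\le c$, i.e.\ $d\le c-4$; the components $u_{c-3}$ and $u_{c-2}$ survive, and testing against the remaining invariants $[x,y]h$ with $h$ antisymmetric of degree $\ge 1$ only produces relations of still higher degree, hence strictly weaker conditions. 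So injectivity of multiplication by $x+y$ in low degrees, which you invoke, is precisely the part of the argument that does not reach the top two degrees, and nothing else in the setup does either.

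Moreover, these top-degree components are not an artifact: they correspond to genuine automorphisms, so the exact relation $q(x,y)=-p(y,x)$ is not recoverable. Take $c=3$ and $\phi\colon x\mapsto x$, $y\mapsto y+[x,y](x-y)$. The symmetric elements of $L_{2,3}$ are spanned by $x+y$ and $[x,y](x-y)$; one checks $\phi(x+y)=x+y+[x,y](x-y)$ and $\phi([x,y](x-y))=[x,y](x-y)$ (the correction terms have degree $\ge 4$ and vanish), so $\phi\in\text{\rm IAut}(L_{2,3}^{S_2})$. Yet $\phi$ is not of the stated form: $[x,y]f=0$ forces $f$ to have degree $\ge 2$, whence $[x,y]f(y,x)=0$ as well, so $\phi(y)$ would have to equal $y$. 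The same construction $y\mapsto y+[x,y]w$ with $w$ antisymmetric homogeneous of degree $c-2$ (or $c-3$, when $c\ge 4$) works for every $c\ge 3$. What the argument actually yields is $q=-p^{*}+w$ with $w$ antisymmetric and supported in degrees $c-3$ and $c-2$, i.e.\ the stated normal form only modulo such top-degree corrections. For what it is worth, the paper's own proof cancels $[x,y](x-y)$ and then $x+y$ without comment, so your instinct to isolate this step was exactly right; but the step is not merely delicate to justify --- as an exact statement it is false, and your proposal (like the paper) does not close it.
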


\begin{proof}
It is sufficient to show that an automorphism $\phi\in \text{\rm IAut}(L_{2,c})$ preserving symmetric polynomials
satisfies the condition of the theorem. In general $\phi$ is of the form
$\phi(x)=x+[x,y]f(x,y)$, $\phi(y)=y+[x,y]g(x,y)$.
Since $x+y\in L_{2,c}^{S_2}$, then $\phi(x+y)$ is symmetric, and hence
\[
x+y+[x,y](f(x,y)+g(x,y))=y+x-[x,y](f(y,x)+g(y,x)).
\]
This gives that
\begin{align}\label{equation}
f(x,y)+g(x,y)+f(y,x)+g(y,x)=0
\end{align}
in the commutator ideal $L_{2,c}'$ of $L_{2,c}$,
which is a $K[x,y]$-module freely generated by $[x,y]$. Now by the symmetric polynomial $[x,y](x-y)$,
we have that
\[
[x+[x,y]f(x,y),y+[x,y]g(x,y)](x-y)=[x,y](x-y)(1+f(x,y)y-xg(x,y))
\]
is symmetric. Consequently $[x,y](x-y)(yf(x,y)-xg(x,y))$ is symmetric. Hence,
\[
[x,y](x-y)(yf(x,y)-xg(x,y))=[x,y](x-y)(xf(y,x)-yg(y,x)),
\]
and thus using Equation \ref{equation} we have that
\begin{align}
0&=yf(x,y)-xg(x,y)-xf(y,x)+yg(y,x)\nonumber\\
&=-xg(x,y)-xf(y,x)+y(f(x,y)+g(y,x))\nonumber\\
&=-xg(x,y)-xf(y,x)-y(g(x,y)+f(y,x))\nonumber\\
&=-(x+y)(f(y,x)+g(x,y)).\nonumber
\end{align}
which implies that $g(x,y)=-f(y,x)$.
\end{proof}

\end{document}